\newtheorem{theorem}{Theorem}[section]
\newtheorem{lemma}[theorem]{Lemma}
\newcommand{\fr}{\frac }
\newcommand{\R}{\mathbb{R}}
\def\sideremark#1{\ifvmode\leavevmode\fi\vadjust{\vbox to0pt{\vss
			\hbox to 0pt{\hskip\hsize\hskip1em
				\vbox{\hsize1.1cm\tiny\raggedright\pretolerance10000
					\noindent #1\hfill}\hss}\vbox to5pt{\vfil}\vss}}}%
\begin{document}

\title[Asymptotic analysis and quantization]{Asymptotic analysis and energy quantization\\ for  the Lane-Emden problem in dimension two}

\author[]{F. De Marchis,  M. Grossi, I. Ianni, F. Pacella}

\address{F. De Marchis,  M. Grossi,  F. Pacella, Dipartimento di Matematica, Universit\`a degli Studi \emph{Sapienza}, P.le Aldo Moro 5, 00185 Roma, Italy}
\address{Isabella Ianni, Dipartimento di Matematica e Fisica, Universit\`a degli Studi della Campania \emph{Luigi Vanvitelli}, V.le Lincoln 5, 81100 Caserta, Italy}

\thanks{2010 \textit{Mathematics Subject classification:}  }

\thanks{\textit{Keywords}: positive solutions, Lane-Emden problem, asymptotic analysis, quantization}

\thanks{Research partially supported by: PRIN $201274$FYK7$\_005$ grant and INDAM - GNAMPA}

\begin{abstract}
We complete the study  of the asymptotic behavior, as $p\rightarrow +\infty$, of the positive solutions to 
\[
\left\{\begin{array}{lr}-\Delta u= u^p &  \mbox{ in }\Omega\\
u=0 &\mbox{ on }\partial \Omega
\end{array}\right.
\]
 when $\Omega$ is any smooth bounded  domain in $\mathbb R^2$, started in \cite{DIPpositive}. In particular we  show  quantization of the energy to multiples of $8\pi e$  and prove convergence to $\sqrt{e}$ of the $L^{\infty}$-norm,  thus confirming  the conjecture made in \cite{DIPpositive}.
 
\end{abstract}

\maketitle

\section{Introduction}
This paper focuses on the asymptotic analysis, as $p\rightarrow +\infty$, of families of solutions to  the Lane-Emden  problem 
\begin{equation}\label{problem}
\left\{\begin{array}{lr}-\Delta u= u^p &  \mbox{ in }\Omega\\
u=0 &\mbox{ on }\partial \Omega
\\
u>0 & \mbox{ in }\Omega
\end{array}\right. \tag{$\mathcal P_p$}
\end{equation}
where  $\Omega$ is any smooth bounded planar domain.

\

This line of investigation  started in \cite{RenWeiTAMS1994, RenWeiPAMS1996} for families $u_p$ of least energy solutions, for which a one-point concentration behavior in the interior of $\Omega$ is proved, as well as the \emph{$L^{\infty}$-bounds}  
\begin{equation}\label{aha}
\sqrt{e}\leq\lim_{p\rightarrow +\infty} \|u_p\|_{\infty}\leq  C \end{equation}
and the following estimate
\[\lim_{p\rightarrow +\infty} p\|\nabla u_p\|_2^2= 8\pi e.\]
The bound in \eqref{aha} was later improved in  \cite{AdiGrossi}, where it was shown that for families of least energy solutions  the following limit holds true:
\begin{equation}\label{LimiteNormaLInfinito}\lim_{p\rightarrow +\infty}\|u_p\|_{\infty}=\sqrt{e}. \end{equation}
Moreover in \cite{AdiGrossi} and \cite{ElMGrossi} the  Liouville equation in the whole plane
 \begin{equation}\label{LiouvilleEquationINTRO}
 \left\{
 \begin{array}{lr}
 -\Delta U=e^U\quad\mbox{ in }\R^2\\
 \int_{\R^2}e^Udx= 8\pi.
 \end{array}
 \right.
 \end{equation}
was identified to be a \emph{limit problem} for the Lane-Emden equation. Indeed in \cite{AdiGrossi}  it was proved that suitable rescalings around the maximum point of any least energy solution to \eqref{problem} converge, in $C^2_{loc}(\mathbb{R}^2)$, to the regular solution \begin{equation}\label{definizioneU}
U(x)=\log\left(\fr1{1+\fr18 |x|^2}\right)^2
\end{equation} of \eqref{LiouvilleEquationINTRO}. 
Hence least energy solutions exhibit only one concentration point and the local limit profile is given by \eqref{definizioneU}. More general solutions having only one peak have been recently studied in \cite{progress}, where their Morse index is computed and connections with the question of the uniqueness of positive solutions in convex domains are shown.

\

Observe that  when $\Omega$ is a ball any  solution  to \eqref{problem}  is radial by Gidas, Ni and Nirenberg result (\cite{GNN})  and so the least energy  is the unique solution for any $p>1$. 

\

In general in non-convex domains there may be families of solutions to \eqref{problem} other than the least energy ones.
This is the case, for instance,  of those found in \cite{EspositoMussoPistoiaPos}  when the domain $\Omega$ is not simply connected, which have higher energy, precisely 
\[\lim_{p\rightarrow +\infty} p\|\nabla u_p\|_2^2= 8\pi e\cdot k,\]
for any fixed integer $k\geq 1$. These solutions exhibit a concentration phenomenon  at $k$ distinct points in $\Omega$ as  $p\rightarrow +\infty$ and  their $L^{\infty}$-norm satisfies the same limit as in \eqref{LimiteNormaLInfinito}.

 \

The question of characterizing the behavior of any family $u_p$ of solutions to \eqref{problem} naturally arises. An almost complete answer has been recently given in  \cite{DIPpositive}  in any  general smooth bounded domain $\Omega$,
under the uniform energy bound assumption
\begin{equation}
\label{energylimit}
 p\|\nabla u_p\|_2^2\leq C
\end{equation}
(see also \cite{DeMarchisIanniPacellaJEMS}, where this general asymptotic analysis was started and the related papers \cite{DIPAMPA,DIPSurvey}).
The results in  \cite{DIPpositive} show that under the assumption \eqref{energylimit} the solutions to \eqref{problem} are necessarily  \emph{spike-like} and that \emph{the energy is quantized}. More precisely in  \cite[Theorem 1.1]{DIPpositive} it is proved that, up to a subsequence, there exists an integer $k\geq 1$ and $k$ distinct points $x_i\in\Omega$, $i=1,\ldots, k$, such that, setting
\[\mathcal S=\{x_1,\ldots, x_k\},\]
one has
\begin{equation}\label{pu_va_a_zerovecchia}
\lim_{p\rightarrow +\infty}\sqrt{p}u_{p}=0\ \mbox{ in $C^2_{loc}(\bar\Omega\setminus\mathcal{S})$}
\end{equation}
and the energy satisfies
\begin{equation}\label{energylimitm_i^2vecchia}
\lim_{p\rightarrow +\infty}p\|\nabla u_{p}\|_2^2= 8\pi \sum_{i=1}^k m_i^2,
\end{equation} 
where  $m_i$'s are positive constants given by
\begin{equation}
m_i=\lim_{\delta\rightarrow 0}\lim_{p\rightarrow +\infty}\max_{\overline{B_{\delta}(x_i)}} u_{p}\end{equation}
which satisfy \begin{equation}\label{mistimati}m_i\geq \sqrt{e}.
\end{equation}
Furthermore  the location of the concentration points is shown to depend on the Dirichlet Green function $G$  of $-\Delta$ in $\Omega$  and on its regular part $H$
\begin{equation}\label{HparteregolareGreen}
H(x,y)=G(x,y)+\frac{\log(|x-y|)}{2\pi}
\end{equation}
according to the following system
\[
m_i \nabla_x H(x_i,x_i)+\sum_{\ell\neq i } m_\ell \nabla_x G(x_i,x_\ell)=0,
\]
and moreover
\[
\lim_{p\rightarrow +\infty}pu_{p}=8\pi \sum_{i=1}^k m_i G(\cdot,x_i)\mbox{ \ in $C^2_{loc}(\bar\Omega\setminus \mathcal S)$}.
\]
In \cite[Lemma 4.1]{DIPpositive} it is also proved that a suitable rescaling of $u_p$ around each concentration point, in the spirit of the one done in \cite{AdiGrossi} for the least energy solutions,  converges to the regular solution $U$ in \eqref{definizioneU}.
\\
\\
Observe that \eqref{pu_va_a_zerovecchia} and \eqref{mistimati} immediately imply the following bound on the $L^{\infty}$-norm:
	\begin{equation}\label{daMigliorare}\sqrt{e}\leq\lim_{p\rightarrow +\infty} \|u_p\|_{\infty}\leq  C.\end{equation}
In \cite{DIPpositive} it was  conjectured that for all  solutions to \eqref{problem}, under the assumption  \eqref{energylimit}, one should have the equality in \eqref{mistimati}.\\
\\
Here we complete the analysis in \cite{DIPpositive} proving this conjecture, namely we show the following:
\begin{theorem}\label{teo:convmi}
	\[m_i=\sqrt{e}, \quad \forall i=1,\ldots, k.\]
\end{theorem}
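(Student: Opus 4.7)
My plan is to derive a sharp expansion of $M_{i,p}:=u_p(x_{i,p})$ by evaluating the Green representation formula at the local maximum point $x_{i,p}$, and then matching the inner (Liouville bubble) profile with the outer (Green function) profile of $u_p$. Compatibility at the intermediate scale $r_{i,p}$ will force $m_i=\sqrt{e}$.

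\textbf{Setup and outer contributions.} Fix $i$, let $x_{i,p}\to x_i$ achieve $M_{i,p}=\max_{\overline{B_\delta(x_i)}}u_p$, and set $r_{i,p}^2=1/(pM_{i,p}^{p-1})$. By \cite[Lemma 4.1]{DIPpositive} the rescaling $v_{i,p}(y):=(p/M_{i,p})(u_p(x_{i,p}+r_{i,p}y)-M_{i,p})$ converges to $U$ in $C^2_{loc}(\R^2)$ and satisfies $v_{i,p}\le 0$ on $B_{\delta/r_{i,p}}(0)$. Splitting the Green representation,
\[u_p(x_{i,p})=\sum_{j=1}^k\int_{B_\delta(x_j)}G(x_{i,p},y)u_p^p(y)dy+\int_{\Omega\setminus\bigcup_jB_\delta(x_j)}G(x_{i,p},y)u_p^p(y)dy,\]
the tail term is $o(1/p)$ because $pu_p\to 8\pi\sum_j m_jG(\cdot,x_j)$ in $C^2_{loc}(\bar\Omega\setminus\mathcal S)$ yields $u_p=O(1/p)$ and hence $u_p^p=O(p^{-p})$ on compacta away from $\mathcal S$. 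For $j\neq i$, the smoothness of $G(x_{i,p},\cdot)$ on $B_\delta(x_j)$ together with the distributional limit $pu_p^p\rightharpoonup 8\pi\sum_j m_j\delta_{x_j}$ (a direct consequence of $-\Delta(pu_p)=pu_p^p$ and of the outer convergence) gives
\[\int_{B_\delta(x_j)}G(x_{i,p},y)u_p^p(y)dy=\tfrac{1}{p}\,8\pi m_jG(x_i,x_j)+o(1/p).\]

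\textbf{Inner computation and conclusion.} For $j=i$ decompose $G(x_{i,p},y)=H(x_{i,p},y)-\tfrac{1}{2\pi}\log|x_{i,p}-y|$; by continuity of $H$ the regular piece contributes $8\pi m_iH(x_i,x_i)/p+o(1/p)$. For the singular piece, change variables $y=x_{i,p}+r_{i,p}w$ and use the crucial identity $r_{i,p}^2u_p^p(x_{i,p}+r_{i,p}w)=\tfrac{M_{i,p}}{p}(1+v_{i,p}(w)/p)^p$ together with the pointwise convergence $(1+v_{i,p}/p)^p\to e^U$. Exploiting $\int_{\R^2}e^U=8\pi$ and the explicit computation $\int_{\R^2}\log|w|e^{U(w)}dw=4\pi\log 8$ (via polar coordinates and the symmetry $s\mapsto 1/s$ for the tabular integral $\int_0^\infty\log s/(1+s)^2ds=0$), the singular piece equals
\[-\frac{4m_i\log r_{i,p}}{p}-\frac{2m_i\log 8}{p}+o(1/p).\]
Collecting everything, multiplying by $p$, and using $-4m_i\log r_{i,p}=2m_i\log p+2m_i(p-1)\log M_{i,p}$ gives
\[pM_{i,p}=2m_i(p-1)\log M_{i,p}+O(\log p).\]
Dividing by $p$ and letting $p\to\infty$ (with $M_{i,p}\to m_i$) yields $m_i=2m_i\log m_i$, i.e. $m_i=\sqrt{e}$.

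\textbf{Main obstacle.} The delicate point is justifying the passage to the limit in the rescaled integral, since the domain $B_{\delta/r_{i,p}}(0)$ is unbounded, the weight $\log|w|$ grows, and $C^2_{loc}$ convergence of $v_{i,p}$ to $U$ is not sufficient. One needs a uniform pointwise upper bound compatible with the Liouville decay $e^U\sim|w|^{-4}$, of the form $v_{i,p}(w)\le -(4-\eta)\log|w|+C$ on the annulus $R\le|w|\le\delta/r_{i,p}$, so that $(1+v_{i,p}/p)^p(1+|\log|w||)$ is dominated by an integrable function independent of $p$. Such a bound is typically obtained by matching the outer Green-function expansion of $u_p$ (which yields $u_p(x)\sim-\tfrac{4m_i}{p}\log|x-x_{i,p}|$ in an intermediate annulus around $x_i$) to the inner bubble, via Harnack/comparison estimates applied to the rescaled equation $-\Delta v_{i,p}=(1+v_{i,p}/p)^p$ on the scale separating the bubble from the Green regime. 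Once this integrability input is secured the expansion above is rigorous and the conclusion $m_i=\sqrt{e}$ follows.
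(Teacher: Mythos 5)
Your proposal is correct and follows essentially the same route as the paper: Green's representation evaluated at the local maximum point, rescaling to the Liouville bubble, dominated convergence to isolate the $\log r_{i,p}$ term, and the identity $\log r_{i,p}=-\tfrac12\left(\log p+(p-1)\log M_{i,p}\right)$, which in the limit forces $\log m_i=\tfrac12$. The uniform decay bound you single out as the main obstacle is precisely the estimate the paper imports from \cite[Proposition 4.3 \& Lemma 4.4]{DIPpositive}, quoted as \eqref{stimaIacopetti}, so that ingredient is already available and your argument closes.
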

This result implies, by \eqref{pu_va_a_zerovecchia} and \eqref{mistimati},  a sharp improvement of \eqref{daMigliorare}:
\begin{theorem}[$L^{\infty}$-norm limit]
	\label{teo:ConvNorma}
	Let  $u_p$ be a family of solutions to \eqref{problem}  and assume that \eqref{energylimit} holds. Then
	\[\lim_{p\rightarrow +\infty}\|u_p\|_{\infty}=\sqrt{e}.\]
\end{theorem}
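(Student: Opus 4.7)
The plan is to derive Theorem~\ref{teo:ConvNorma} as an almost immediate consequence of Theorem~\ref{teo:convmi} combined with the concentration/decay information from \cite{DIPpositive} already recalled in the introduction. The strategy has three steps: (i) localize the supremum of $u_p$ near the concentration set $\mathcal{S}$; (ii) bound it by the peak values $m_i$; (iii) invoke Theorem~\ref{teo:convmi} to identify $m_i=\sqrt e$.

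For step (i), I would fix $\delta>0$ small enough that the balls $\overline{B_\delta(x_i)}$, $i=1,\dots,k$, are pairwise disjoint and contained in $\Omega$. Applying the $C^2_{\mathrm{loc}}$ convergence in \eqref{pu_va_a_zerovecchia} to the compact set $K_\delta:=\bar\Omega\setminus\bigcup_{i=1}^k B_\delta(x_i)$ yields $\sup_{K_\delta}u_p \leq C_\delta/\sqrt p \to 0$. On the other hand, from \eqref{mistimati} and the very definition of $m_i$ we have $\|u_p\|_\infty\geq \max_{\overline{B_\delta(x_i)}}u_p$, and letting first $p\to\infty$ and then $\delta\to 0$ gives $\liminf_{p\to\infty}\|u_p\|_\infty\geq m_i\geq \sqrt e>0$. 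Hence any point $y_p\in\overline\Omega$ where $u_p$ attains its maximum must lie in $\bigcup_{i=1}^k B_\delta(x_i)$ for all sufficiently large $p$, so
\[
\|u_p\|_\infty = u_p(y_p) \leq \max_{1\leq i\leq k}\max_{\overline{B_\delta(x_i)}}u_p .
\]

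For steps (ii) and (iii), taking $\limsup_{p\to\infty}$ and then $\delta\to 0^+$ in the above inequality yields, by the definition of $m_i$,
\[
\limsup_{p\to\infty}\|u_p\|_\infty \;\leq\; \max_{1\leq i\leq k} m_i \;=\; \sqrt e,
\]
the last equality being precisely the content of Theorem~\ref{teo:convmi}. Combined with the lower bound recorded above (equivalently, the left inequality in \eqref{daMigliorare}), this gives $\lim_{p\to\infty}\|u_p\|_\infty=\sqrt e$.

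Since all the heavy lifting is contained in Theorem~\ref{teo:convmi}, Theorem~\ref{teo:ConvNorma} itself presents no substantial obstacle; the only technical point is ensuring the existence of the iterated limit defining $m_i$, which is guaranteed for $\delta$ small by the rescaling analysis of \cite[Lemma~4.1]{DIPpositive}. The genuine difficulty of the paper therefore lies entirely in establishing Theorem~\ref{teo:convmi}.
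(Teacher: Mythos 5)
Your argument is correct and is essentially the paper's own: there Theorem \ref{teo:ConvNorma} is presented as an immediate consequence of Theorem \ref{teo:convmi} together with \eqref{pu_va_a_zerovecchia} and the definition of the $m_i$'s, which is exactly the localization-plus-identification you spell out. The only point worth noting (glossed over in the paper too) is that $k$, $x_i$, $m_i$ are produced along a subsequence, so the full limit follows by the standard remark that every subsequence has a further subsequence along which $\|u_p\|_{\infty}\to\sqrt{e}$, the limit value being independent of the subsequence.
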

On the other side, by  \eqref{energylimitm_i^2vecchia}, Theorem \ref{teo:convmi}
implies  a quantization of the energy to integer multiples of $8\pi e$ as $p$ goes to infinity. Our final asymptotic results can be summarized as follows:
\begin{theorem}[Complete asymptotic behavior \& quantization]
	\label{teoAsymptotic} 
	Let $u_p$ be a family of solutions to \eqref{problem}  and assume that \eqref{energylimit} holds.
	Then there exist a finite number $k$   of distinct  points $x_i\in\Omega$, $i=1,\ldots, k$   and a sequence $p_n\rightarrow +\infty$ as $n\rightarrow +\infty$
	such that setting \[\mathcal S:=\{x_1,\ldots, x_k\}\]
	one has
	\begin{equation}\label{pu_va_a_zeroTeo}
	\lim_{n\rightarrow \infty}\sqrt{p_n}u_{p_n}= 0\ \mbox{ in $C^2_{loc}(\bar\Omega\setminus\mathcal{S})$.}
	\end{equation}
	The concentration points $x_i, \ i=1,\ldots, k$ satisfy  the system
	\begin{equation}\label{x_j relazioneTeo}
	\nabla_x H(x_i,x_i)+\sum_{i\neq \ell}\nabla_x G(x_i,x_\ell)=0.
	\end{equation}
	Moreover 
	\begin{equation}\label{convergenzapup}
	\lim_{n\rightarrow \infty}p_nu_{p_n}(x)=  8\pi \sqrt{e} \sum_{i=1}^k G(x,x_i)\  \mbox{ in } 
	C^2_{loc}(\bar\Omega\setminus\mathcal S)  
	\end{equation}
	and the energy satisfies
	\begin{equation}
	\label{convergenzaenergia}
	\lim_{n\rightarrow \infty}p_n\int_\Omega |\nabla u_{p_n}(x)|^2\,dx= 8\pi e\cdot k.
	\end{equation}
\end{theorem}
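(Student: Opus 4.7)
The plan is to derive Theorem \ref{teoAsymptotic} as an essentially immediate consequence of two ingredients: the structural asymptotic description from \cite[Theorem 1.1]{DIPpositive}, which is recalled in the introduction, and the sharp identification $m_i=\sqrt{e}$ provided by Theorem \ref{teo:convmi} of the present paper. The proof is therefore a bookkeeping assembly once the value of the $m_i$'s has been pinned down, and no new analytical step is needed.

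First, I would apply \cite[Theorem 1.1]{DIPpositive} under the standing energy bound \eqref{energylimit}. This produces, along a subsequence $p_n\to+\infty$, the finite concentration set $\mathcal{S}=\{x_1,\dots,x_k\}\subset\Omega$, positive constants $m_i$ satisfying \eqref{mistimati}, the $C^2_{loc}(\bar\Omega\setminus\mathcal S)$ convergence \eqref{pu_va_a_zerovecchia} (which is exactly \eqref{pu_va_a_zeroTeo}), the limit profile
\[
\lim_{n\to+\infty} p_n u_{p_n} \;=\; 8\pi \sum_{i=1}^{k} m_i\, G(\cdot, x_i) \quad \text{in } C^2_{loc}(\bar\Omega\setminus\mathcal S),
\]
the balance system
\[
m_i \nabla_x H(x_i,x_i) + \sum_{\ell\neq i} m_\ell\, \nabla_x G(x_i,x_\ell)=0, \qquad i=1,\dots,k,
\]
and the energy identity \eqref{energylimitm_i^2vecchia}. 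All four conclusions of the present theorem follow by specializing these formulas to the common value $m_i=\sqrt{e}$.

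Second, I would invoke Theorem \ref{teo:convmi} to substitute $m_i=m_\ell=\sqrt{e}$ throughout. In the balance system the factor $\sqrt{e}$ can be pulled out and divided away, which yields precisely \eqref{x_j relazioneTeo}. In the Green function representation one gets $\lim_n p_n u_{p_n} = 8\pi\sqrt{e}\sum_{i=1}^{k} G(\cdot,x_i)$ in $C^2_{loc}(\bar\Omega\setminus\mathcal S)$, i.e.\ \eqref{convergenzapup}. In the energy identity one obtains
\[
\lim_{n\to+\infty} p_n \int_\Omega |\nabla u_{p_n}|^2\,dx \;=\; 8\pi \sum_{i=1}^{k} m_i^2 \;=\; 8\pi\, e\, k,
\]
which is \eqref{convergenzaenergia}.

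Consequently, the entire difficulty is concentrated in Theorem \ref{teo:convmi}, whose proof (through the rescaling of \cite[Lemma 4.1]{DIPpositive} around each concentration point and a refined expansion in the spirit of \cite{AdiGrossi}) is the main technical obstacle. Once $m_i=\sqrt{e}$ is available, Theorem \ref{teoAsymptotic} is just the clean restatement of the results of \cite{DIPpositive} with this sharp value, exhibiting quantization of the energy to integer multiples of $8\pi e$.
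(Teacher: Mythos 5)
Your proposal is correct and matches the paper's (implicit) argument exactly: the paper also obtains Theorem \ref{teoAsymptotic} simply by taking the subsequential conclusions of \cite[Theorem 1.1]{DIPpositive} recalled in the introduction and substituting the value $m_i=\sqrt{e}$ from Theorem \ref{teo:convmi} into the balance system, the limit $p\,u_p\to 8\pi\sum_i m_i G(\cdot,x_i)$, and the energy identity \eqref{energylimitm_i^2vecchia}. No further comment is needed.
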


\

\section{Proof of Theorem \ref{teo:convmi}}\label{section:quantization}

Let $k\geq 1$ and $x_i\in\Omega$, $i=1,\ldots,k$ be as in the introduction and let us keep the notation $u_p$ to denote the corresponding subsequence of the family $u_p$ for which the results in \cite{DIPpositive} hold true.\\  

In particular (see \cite[Theorem 1.1 \& Lemma 4.1]{DIPpositive}) for $r>0$ such that $B_{3r}(x_j)\subset\Omega,$ $\forall  j=1,\ldots, k$ and $B_{3r}(x_j)\cap B_{3r}(x_i)=\emptyset$, $\forall j=1,\ldots, k$, $j\neq i$, letting $y_{i,p}\in\Omega$ be the sequence defined as
 \begin{equation}
\label{ySonoMax}
u_{p}(y_{i,p}):=\max_{\overline{B_{2r}(x_i)}} u_{p}
\end{equation}
it follows that
\begin{equation}
\label{convMax}
\lim_{p\rightarrow +\infty}y_{i,p}= x_i,
\end{equation}
\begin{equation}
\label{convValMax} \lim_{p\rightarrow +\infty}u_{p}(y_{i,p})= m_i,
\end{equation}
\begin{equation} 
\label{defEpsilon}
\lim_{p\rightarrow +\infty}\varepsilon_{i,p}\left(:=\left[ p u_{p}(y_{i,p})^{p-1}\right]^{-1/2}\right)= 0
\end{equation}
 and setting
\begin{equation}
\label{defRiscalataMaxvecchia}
w_{i,p}(y):=\fr{p}{u_{p}(y_{i,p})}(u_{p}(y_{i,p}+\varepsilon_{i,p} y)-u_{p}(y_{i,p})),\quad  y\in  \Omega_{i,p}:=\frac{\Omega-y_{i,p}}{\varepsilon_{i,p}},
%
\end{equation}
then
\begin{equation}\label{convRiscalateNeiMaxVecchia}
\lim_{p\rightarrow +\infty}w_{i,p}= U\ \mbox{ in }\ C^2_{loc}(\R^2),
\end{equation}
where $U$ is as in \eqref{definizioneU}.
\\
\\
Furthermore by the result in \cite[Proposition 4.3 \& Lemma 4.4]{DIPpositive} 
we have that for any $\gamma\in (0,4)$ there exists $R_{\gamma}>1$ such that
\begin{equation}\label{stimaIacopetti}
w_{i,p}(z)\leq \left(4-\gamma\right)\log\frac{1}{|z|}+\widetilde C_{\gamma},\qquad\forall i=1,\ldots,k
\end{equation}
for some $\widetilde C_{\gamma}>0$, provided $R_\gamma\leq |z|\leq \frac{r}{\varepsilon_{i,p}}$ and $p$ is sufficiently large.\\
\\
The pointwise estimate \eqref{stimaIacopetti} implies the following uniform bound, which will be the key to use the dominated convergence theorem in the proof of Theorem \ref{teo:convmi}:
\begin{lemma}
\begin{equation}\label{arietta2} 
0\leq \left(1+\frac{w_{j,p}(z)}{p}\right)^{p}\leq 
\left\{   
\begin{array}{lr}
 1 &\mbox{ for }|z|\leq R_{\gamma}
\\
 C_{\gamma}\frac{1}{|z|^{4-\gamma}} & \mbox{ for  
	$R_\gamma\leq 
	|z|\leq \frac{r}{\varepsilon_{j,p}}$}
\end{array}\right..
\end{equation}		
\end{lemma}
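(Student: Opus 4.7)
The starting point is to rewrite the quantity of interest in a form that makes both branches of the estimate transparent. Directly from the definition \eqref{defRiscalataMaxvecchia} of $w_{j,p}$, one has
\[
1+\frac{w_{j,p}(z)}{p}=\frac{u_{p}(y_{j,p}+\varepsilon_{j,p}z)}{u_{p}(y_{j,p})},
\]
so that
\[
\left(1+\frac{w_{j,p}(z)}{p}\right)^{p}=\left(\frac{u_{p}(y_{j,p}+\varepsilon_{j,p}z)}{u_{p}(y_{j,p})}\right)^{p}.
\]
By \eqref{convMax} and \eqref{defEpsilon}, for $|z|\le r/\varepsilon_{j,p}$ and $p$ large the point $y_{j,p}+\varepsilon_{j,p}z$ lies in $B_{2r}(x_j)\subset\Omega$, where $u_p>0$. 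Hence the ratio, and therefore its $p$-th power, is nonnegative, giving the lower bound in \eqref{arietta2}.

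For the inner range $|z|\le R_{\gamma}$, the plan is to use the maximality of $y_{j,p}$. Indeed $R_{\gamma}$ is fixed while $\varepsilon_{j,p}\to 0$, so for $p$ large $y_{j,p}+\varepsilon_{j,p}z\in\overline{B_{2r}(x_j)}$, and then \eqref{ySonoMax} yields $u_p(y_{j,p}+\varepsilon_{j,p}z)\le u_p(y_{j,p})$. Consequently the ratio is bounded above by $1$, which is precisely the first branch of the claimed estimate.

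For the outer range $R_{\gamma}\le|z|\le r/\varepsilon_{j,p}$, I would combine the pointwise bound \eqref{stimaIacopetti} with the elementary inequality
\[
\left(1+\frac{t}{p}\right)^{p}\le e^{t}\qquad\text{whenever }1+t/p\ge 0,
\]
which is just $\log(1+s)\le s$ applied to $s=t/p$. Applying this with $t=w_{j,p}(z)$ (which is admissible by the nonnegativity established above) and then inserting \eqref{stimaIacopetti} gives
\[
\left(1+\frac{w_{j,p}(z)}{p}\right)^{p}\le e^{w_{j,p}(z)}\le e^{\widetilde C_{\gamma}}\cdot\frac{1}{|z|^{4-\gamma}},
\]
so that the second branch follows by setting $C_{\gamma}:=e^{\widetilde C_{\gamma}}$.

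The argument is essentially formal once the rewriting above is recognized; no serious obstacle arises since all the non-trivial analytic content is already encoded in the pointwise bound \eqref{stimaIacopetti} borrowed from \cite{DIPpositive}. The only small points of care are verifying that $y_{j,p}+\varepsilon_{j,p}z$ stays in the appropriate ball (handled by \eqref{convMax}--\eqref{defEpsilon}) and checking that $1+w_{j,p}(z)/p\ge 0$ before invoking the exponential inequality.
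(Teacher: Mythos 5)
Your proof is correct and follows essentially the same route as the paper: the inner bound comes from the maximality of $u_p(y_{j,p})$ on $\overline{B_{2r}(x_j)}$ (the paper phrases this as $w_{j,p}\le 0$ on $B_{r/\varepsilon_{j,p}}(0)$, valid on the whole rescaled ball, not just $|z|\le R_\gamma$), and the outer bound from $\bigl(1+\tfrac{w_{j,p}}{p}\bigr)^p\le e^{w_{j,p}}$ combined with \eqref{stimaIacopetti}. The rewriting of $1+w_{j,p}/p$ as the ratio $u_p(y_{j,p}+\varepsilon_{j,p}z)/u_p(y_{j,p})$ is just a transparent repackaging of the same facts.
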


\begin{proof}
Observe that by \eqref{convMax}
	\[
	B_{r}(y_{i,p})\subset B_{2r}(x_i),\mbox{ for $p$ sufficiently large},
	\]
	as a consequence
	\begin{equation}
	\label{wNegativa}
	w_{i,p}\leq 0, \ \mbox{ in }B_{\frac{r}{\epsilon_{i,p}}}(0)\ (\subset\Omega_{i,p}), \mbox{ for $p$ large},
	\end{equation}	
which implies the first bound in \eqref{arietta2}.
	\\
	For $p$ sufficiently large, by \eqref{wNegativa} and \eqref{stimaIacopetti}, we also get the second bound in \eqref{arietta2}:
	\[
	0\leq  \left( 1+\frac{w_{j,p}(z)}{p}\right)^{p}=
	e^{p\log \left( 1+\frac{w_{j,p}(z)}{p} \right)}
	\leq   e^{w_{j,p}(z)}\leq C_{\gamma}\frac{1}{|z|^{4-\gamma}}
	\]
	for  
	$R_\gamma\leq 
	|z|\leq \frac{r}{\varepsilon_{j,p}}$.
\end{proof}

\

\begin{proof}[Proof of Theorem \ref{teo:convmi}]
	Observe that by the assumption \eqref{energylimit} and H\"older inequality
	\begin{eqnarray*}
		(0\leq)\ p\int_{\Omega}u_p^p(x) dx & \leq &  p^{\frac{1}{p+1}} |\Omega|^{\frac{1}{p+1}}\left[p\int_{\Omega}|\nabla u_{p}|^2 dx\right]^{\frac{p}{p+1}}
		\\
		&= & p\int_{\Omega}|\nabla u_{p}|^2 dx +o_p(1)
		\\
		&\overset{\eqref{energylimit} }{\leq }& C +o_p(1),
	\end{eqnarray*}
	so that, by the properties  of the Green function $G$,
	\begin{eqnarray}
	\label{stimaGreenFormula}
	\int_{\Omega\setminus B_{2r}(x_j)} G(y_{j,p},x) u_p^p(x) dx
	& \leq &
	C_{r}\int_{\Omega\setminus B_{2r}(x_j)}  u_p^p(x) dx
	\nonumber
	\\
	& \leq & 
	C_{r}\int_{\Omega}  u_p^p(x) dx =O\left(\frac1p\right)
	\end{eqnarray}
	and similarly, observing that for $p$ large enough the points $y_{j,p}\in B_{\frac{r}{2}}(x_j)$  by \eqref{convMax} and $B_{\frac{r}{2}}(x_j)\subset B_{r}(y_{j,p})\subset B_{2r}(x_j)$, also
	\begin{eqnarray}
	\label{cambioPalletta}
	\int_{B_{2r}(x_j)\setminus   B_{r}(y_{j,p})  }
	G(y_{j,p},x) u_p^p(x) dx &\leq & \int_{\{\frac{r}{2}<|x-x_j|<2r\}}
	G(y_{j,p},x) u_p^p(x)dx
	\nonumber
	\\
	&\leq & C_{\frac{r}{2}}\int_{\Omega} u_p^p(x) dx =O\left(\frac1p\right).
	\end{eqnarray}
	By the Green representation formula, using the previous estimates, we then get
	\begin{eqnarray}\label{GreenRepresPart}
	u_{p}(y_{j,p})&=&\int_{\Omega} G(y_{j,p},x) u_p^p(x) dx
	\nonumber\\
	&=& \int_{B_{2r}(x_j)} G(y_{j,p},x) u_p^p(x) dx+\int_{\Omega\setminus B_{2r}(x_j)} G(y_{j,p},x) u_p^p(x) dx
	\nonumber\\
	&\overset{{\scriptsize{\begin{array}{cc}\eqref{stimaGreenFormula}\\\eqref{cambioPalletta}\end{array}}}}{=} & \int_{B_{r}(y_{j,p})} G(y_{j,p},x) u_p^p(x) dx+o_p(1)
	\nonumber\\
	&\overset{\eqref{defRiscalataMaxvecchia}}{=}& \int_{B_{\frac{r}{\varepsilon_{j,p}}}(0)} G(y_{j,p},y_{j,p}+\varepsilon_{j,p}z) \left(1+\frac{w_{j,p}(z)}{p}\right)^{p} dz+o_p(1)
	\nonumber\\
	&\overset{\eqref{HparteregolareGreen}}{=} &
	\frac{u_{p}(y_{j,p})}{p}\int_{B_{\frac{r}{\varepsilon_{j,p}}}(0)} H(y_{j,p},y_{j,p}+\varepsilon_{j,p}z) \left(1+\frac{w_{j,p}(z)}{p}\right)^{p} dz
	\nonumber
	\\
	&&
	-\frac{u_{p}(y_{j,p})}{2\pi p}\int_{B_{\frac{r}{\varepsilon_{j,p}}}(0)} \log|z| \left(1+\frac{w_{j,p}(z)}{p}\right)^{p} dz  
	\nonumber
	\\
	&&
	-\frac{u_{p}(y_{j,p})\log\varepsilon_{j,p} }{2\pi p}\int_{B_{\frac{r}{\varepsilon_{j,p}}}(0)} \left(1+\frac{w_{j,p}(z)}{p}\right)^{p} dz +o_p(1)
	\nonumber
	\\
	&=& A_p +B_p+ C_p +o_p(1).
	\end{eqnarray}
	Since  $H$ is smooth and  $x_j\not\in\partial\Omega$,
	 by \eqref{convMax} and \eqref{defEpsilon} we get 
	\[\lim_{p\rightarrow +\infty} H(y_{j,p},y_{j,p}+\varepsilon_{j,p}z)= H(x_j,x_j), \ \mbox{for any $z$}\in\mathbb R^2,\]
	so by  \eqref{convValMax}, the convergence \eqref{convRiscalateNeiMaxVecchia} and the uniform bounds in \eqref{arietta2} we  can  apply the dominated convergence theorem,  and since the function 
	$z\mapsto\frac{1}{|z|^{4-\gamma}}$ is integrable in
	$\{|z|> R_{\gamma}\}$ choosing $\gamma\in (0,2)$  we deduce
	\begin{eqnarray*}  
		&& \lim_{p\rightarrow +\infty} u_{p}(y_{j,p})\int_{B_{\frac{r}{\varepsilon_{j,p}}}(0)} H(y_{j,p},y_{j,p}+\varepsilon_{j,p}z) \left(1+\frac{w_{j,p}(z)}{p}\right)^{p} dz\\
		&&\qquad\qquad\qquad\qquad\qquad\qquad\qquad =m_j H(x_j,x_j)\int_{\R^2}e^U\overset{\eqref{LiouvilleEquationINTRO}}{=} 8\pi\, m_j H(x_j,x_j),
	\end{eqnarray*}
	from which 
	\begin{equation}\label{termineA_p}
	A_p := \frac{u_{p}(y_{j,p})}{p}\int_{B_{\frac{r}{\varepsilon_{j,p}}}(0)} H(y_{j,p},y_{j,p}+\varepsilon_{j,p}z) \left(1+\frac{w_{j,p}(z)}{p}\right)^{p} dz=o_p(1).
	\end{equation}
	For the second term in  \eqref{GreenRepresPart} we apply again the dominated convergence theorem, 
	using  \eqref{arietta2} and observing now that
	the function 
	$z\mapsto \frac{\log |z|}{|z|^{4-\gamma}}$ is integrable 
	in $\{|z|>R_{\gamma}\}$ and that $z\mapsto\log |z|$ is integrable in
	$\{|z|\leq R_{\gamma}\}$. 
	Hence we get
	\[\lim_{p\rightarrow +\infty} u_{p}(y_{j,p})\int_{B_{\frac{r}{\varepsilon_{j,p}}}(0)} \log |z|\left(1+\frac{w_{j,p}(z)}{p}\right)^{p} dz= m_j \int_{\mathbb R^2}\log |z|e^{U(z)}dz < +\infty\]
	and this implies that
	\begin{eqnarray}
	\label{termineB_p}
	B_p &:=&-\frac{u_{p}(y_{j,p})}{2\pi p}\int_{B_{\frac{r}{\varepsilon_{j,p}}}(0)} \log|z| \left(1+\frac{w_{j,p}(z)}{p}\right)^{p} dz= o_p(1).
	\end{eqnarray}
	Finally for the last term in \eqref{GreenRepresPart} let us observe that  by the definition of $\varepsilon_{j,p}$ in \eqref{defEpsilon}
	\begin{equation}\label{logepsilon}
	\log\varepsilon_{j,p}=-\frac{(p-1)}{2}\log u_{p}(y_{j,p}) -\frac{1}{2}\log p,
	\end{equation}
	again by the dominated convergence theorem 
	\begin{equation}\label{aorasi}
	\lim_{p\rightarrow +\infty} \int_{B_{\frac{r}{\varepsilon_{j,p}}}(0)} \left(1+\frac{w_{j,p}(z)}{p}\right)^{p} dz = \int_{\R^2}e^{U}\overset{\eqref{LiouvilleEquationINTRO}}{=}  8\pi,
	\end{equation}
 and it follows
	\begin{eqnarray}\label{termineC_p}
	C_p &:=& -\frac{u_{p}(y_{j,p})\log\varepsilon_{j,p} }{2\pi p}\int_{B_{\frac{r}{\varepsilon_{j,p}}}(0)} \left(1+\frac{w_{j,p}(z)}{p}\right)^{p} dz
	\nonumber
	\\
	&\overset{\eqref{aorasi}}{=}& -\frac{u_{p}(y_{j,p})\log\varepsilon_{j,p} }{2\pi p} \ \left(8\pi +o_p(1)\right)
	\nonumber
	\\
	&\overset{\eqref{logepsilon}}{=}& u_{p}(y_{j,p}) \left[\frac{(p-1)}{p} \log u_{p}(y_{j,p}) +\frac{\log p}{p}  \right]\left(2+o_p(1)\right).
	\end{eqnarray}
	Substituting \eqref{termineA_p}, \eqref{termineB_p} and \eqref{termineC_p} into \eqref{GreenRepresPart} we get
	\[u_{p}(y_{j,p})=  u(y_{j,p}) \left[\frac{(p-1)}{p} \log u_{p}(y_{j,p}) +\frac{\log p}{p}  \right]\left(2+o_p(1)\right) +o_p(1),\]
	passing to the limit as $p\rightarrow +\infty$ and using \eqref{convValMax} conclude that
	\[\log m_j =\frac{1}{2}.\]
\end{proof}

\end{document}